\documentclass{article}

\usepackage{a4}
\usepackage{paralist}

\title{Robust discretization and solvers for elliptic optimal control
  problems with energy regularization} 
\author{Ulrich~Langer\footnote{Institute for Computational Mathematics,
    Johannes Kepler University Linz, Altenberger Stra{\ss}e 69, 4040 Linz, Austria, Email: ulanger@numa.uni-linz..ac.at},
    \; Olaf~Steinbach\footnote{Institut f\"{u}r Angewandte Mathematik,
    Technische Universit\"{a}t Graz, Steyrerga{\ss}e 30, 8010 Graz, Austria,
    Email: o.steinbach@tugraz.at}, 
    \; Huidong~Yang\footnote{Johann Radon Institute for Computational and Applied
    Mathematics, Austrian Academy of Sciences, Altenberger Stra{\ss}e 69, 4040 Linz, Austria, Email:
    huidong.yang@ricam.oeaw.ac.at} 
}  

\date{}

\usepackage{amsmath}
\usepackage{amsthm}
\usepackage{amsfonts}
\usepackage{booktabs}
\usepackage{graphicx}
\usepackage{diagbox}

\usepackage{multirow}
\usepackage{hyperref}

\usepackage[ulem=normalem,draft]{changes}

\newcommand{\be}{\begin{equation}}
\newcommand{\ee}{\end{equation}}


\newcommand{\ignore}[1]{}


\newtheorem{theorem}{Theorem}

\setlength{\voffset}{-10mm}

\begin{document}

\maketitle

\begin{abstract}
  We consider the finite element discretization and the iterative
  solution of singularly perturbed elliptic reaction-diffusion equations
  in three-dimensional computational domains. These equations arise from
  the optimality conditions for elliptic distributed optimal control
  problems with energy regularization that were recently studied by
  M.~Neum\"{u}ller and O.~Steinbach (2020). We provide quasi-optimal
  a priori finite element error estimates which depend both on the
  mesh size $h$ and on the regularization parameter $\varrho$. The 
  choice $\varrho = h^2$ ensures optimal convergence which only depends
  on the regularity of the target function. For the iterative solution,
  we employ an algebraic multigrid preconditioner and a
  balancing domain decomposition by constraints (BDDC) preconditioner.
  We numerically study robustness and efficiency of the proposed algebraic
  preconditioners with respect to the mesh size $h$, the regularization
  parameter $\varrho$, and the number of subdomains (cores) $p$.
  Furthermore, we investigate the parallel performance of the BDDC
  preconditioned conjugate gradient solver.\\
\end{abstract} 

\begin{keywords}
Elliptic optimal control problems, energy regularization,\\
     \hspace*{24mm} finite element discretization,
 a priori error estimates, fast solvers \\
\end{keywords}

\begin{msc}
49K20, 49M41,  35J25, 65N12, 65N30, 65N55
\end{msc}

%
%
\section{Introduction}\label{sec:intro}
In the recent work \cite{NeuSte20}, M.~Neum\"{u}ller and O.~Steinbach have
investigated regularization error estimates for the solution of distributed
optimal control problems in energy spaces on the basis of the following
model problem: Minimize the cost functional
\begin{equation}\label{eq:optcon}
  {\mathcal J}(u_\varrho, z_\varrho) \, = \,
  \frac{1}{2} \int_\Omega [ u_\varrho(x)-\overline{u} (x) ]^2 \, dx +
  \frac{\varrho}{2} \, \|z_\varrho\|_{H^{-1}(\Omega)}^2
\end{equation} 
with respect to the state $u_\varrho \in H^1_0(\Omega)$ and the control
$z_\varrho\in H^{-1}(\Omega)$ subject to the constraints
\begin{equation}\label{eq:state}
  -\Delta u_\varrho = z_\varrho \quad \textup{in} \;  \Omega,
  \quad u_\varrho=0 \quad \textup{on}\; \Gamma,
\end{equation}
where $\overline{u} \in L^2(\Omega)$ represents the given desired state
(target), and $\varrho\in {\mathbb R}_+$ denotes the regularization
parameter. Here, the computational domain $\Omega\subset{\mathbb R}^3$ is
assumed to be a bounded Lipschitz domain with boundary $\Gamma=\partial\Omega$.
Note that
\[
\|z_\varrho\|_{H^{-1}(\Omega)} = \| \nabla u_\varrho \|_{L^2(\Omega)} .
\]
The associated adjoint state equation reads: Find the adjoint state
$p_\varrho\in H^1_0(\Omega)$ such that 
\begin{equation}\label{eq:adjoint}
  -\Delta p_\varrho  = u_\varrho - \overline{u}
   \quad \textup{in} \;  \Omega,
  \quad  p_\varrho=0 \quad \textup{on} \; \Gamma.
\end{equation}
Further, the associated gradient equation is given by the relation
\begin{equation}\label{eq:grad}
  p_\varrho + \varrho \, u_\varrho = 0  \quad \textup{ in }  \Omega.
\end{equation}
Using the gradient equation (\ref{eq:grad}) for eliminating the adjoint
state $p_\varrho$ from (\ref{eq:adjoint}), we finally obtain the singularly
perturbed reaction-diffusion equation
\begin{equation}\label{eq:sigular}
  -\varrho \Delta u_\varrho  + u_\varrho  = \overline{u}  \quad
  \textup{in} \;  \Omega, \quad u_\varrho=0 \quad \textup{on}\; \Gamma,
\end{equation} 
for defining the state $u_\varrho$.
Once the state $u_\varrho$ is computed from \eqref{eq:sigular}, the adjoint
state $p_\varrho$ is given by \eqref{eq:grad}, and the control
$z_\varrho = -\Delta u_\varrho = \varrho^{-1} (\overline{u} - u_\varrho)$
by \eqref{eq:state} and \eqref{eq:sigular}.

The variational formulation of the Dirichlet boundary value problem
(\ref{eq:sigular}) is to find $u_\varrho \in H_0^1(\Omega)$ such that
\begin{equation}\label{eq:varform}
  \varrho \int_\Omega \nabla u_\varrho(x) \cdot \nabla v(x) \, dx +
  \int_\Omega u_\varrho (x) \, v(x) \, dx =
  \int_\Omega \overline{u}(x) \, v(x) \, dx 
\end{equation}
is satisfied for all $v\in H_0^1(\Omega)$. When assuming some regularity of
the given target
$\overline{u}\in H_0^s(\Omega):=[L^2(\Omega), H_0^1(\Omega)]_s$ for
$s\in[0,1]$ or $\overline{u} \in H^1_0(\Omega) \cap H^s(\Omega)$ for
$s \in (1,2]$, the following error estimate with respect to the regularization
parameter $\varrho$ has been shown in \cite{NeuSte20}:
\begin{equation}\label{eq:errest} 
  \| u_\varrho - \overline{u} \|_{L^2(\Omega)} \, \leq \, 
  c \, \varrho^{s/2} \, \|\overline{u}\|_{H^s(\Omega)}.
\end{equation}
Moreover, in the case $\overline{u} \in H^1_0(\Omega) \cap H^s(\Omega)$ for $s \in (1,2]$, 
there also holds
\begin{equation}\label{eq:errest H1} 
  \| \nabla (u_\varrho - \overline{u}) \|_{L^2(\Omega)} \, \leq \, 
  c \, \varrho^{(s-1)/2} \, \|\overline{u}\|_{H^s(\Omega)}.
\end{equation}
For $\varrho \to 0$, the boundary value problem (\ref{eq:sigular})
belongs to a class of singularly perturbed problems, see, e.g.,
\cite{Tobiska1983}. For robust numerical methods to treat such problems,
we refer to the monograph \cite{Tobiska2008}.

In this paper, we investigate the errors
$\|u_{\varrho h} - \overline{u} \|_{L^2(\Omega)}$  
and $\|\nabla(u_{\varrho h} - \bar{u}) \|_{L^2(\Omega)}$
of the finite element approximation $u_{\varrho h}$ to the target $\overline{u}$ 
in terms of both the regularization parameter  $\varrho$ and the discretization 
parameter $h$. The error estimate is based on the estimates \eqref{eq:errest}
and \eqref{eq:errest H1},
and well-known finite element discretization error estimates.
It turns out that the choice $\varrho = h^2$ gives 
the optimal convergence rate $O(h^2)$ for continuous, piecewise linear 
finite elements. Moreover,  we consider iterative methods for the solution
of the linear system of algebraic equations that arise from the Galerkin
finite element discretization of the variational problem  (\ref{eq:varform}),
which are robust with respect to the mesh size $h$ and the
regularization parameter $\varrho$. Such solvers have been addressed in many
works. For example, geometric multigrid methods, 
which are robust with respect to the mesh size $h$ and the parameter $\varrho$ 
in the case of standard Galerkin discretization methods, 
were analyzed in \cite{OlsReu00}, based on the
convergence rate $\min\{1, h^2/\varrho\}$ in the $L^2$ norm as provided
in \cite{SchaWahl83}. The parameter independent
contraction number of the multigrid method is achieved by a combination of
such a deteriorate approximation property with an improved smoothing
property. Robust and optimal algebraic multilevel (AMLI) methods for
reaction-diffusion type problems were studied in \cite{KraWolf13}. Therein, a
uniformly convergent AMLI method with optimal complexity was shown using the
constant in the strengthened Cauchy-Bunyakowski-Schwarz inequality computed
for both mass and stiffness matrices. Robust block-structured
preconditioning on both piecewise uniform meshes and graded meshes with
boundary layers has been considered in \cite{MacMad13,NhanMacMad18} in
comparison with standard robust multigrid preconditioners. The new block
diagonal preconditioner was based on the partitioning of degrees of freedom
into those on the corners, edges, and interior points,
respectively. The perturbation parameter independent condition
number of the preconditioned linear system using diagonal and incomplete
Cholesky preconditioning methods for singularly perturbed problems on
layer-adapted meshes has been recently shown in \cite{NHANMADDEN2021}.
Considering the adaptive finite element discretization on
simplicial meshes with local refinements, we employ an
algebraic multigrid (AMG) preconditioner
\cite{Brandt1985,Briggs2000,Ruge1987} and the balancing domain
decomposition by constraints (BDDC) preconditioner
\cite{Dohrmann2003,MandelDohrman2003,Mandel2005} for solving the
discrete system. We make  performance studies 
of the preconditioned conjugate gradient (PCG) method 
that uses these AMG and BDDC preconditioners. 
In particular, the BDDC preconditioned conjugate gradient solver
shows excellent strong scalability.

The reminder of the paper is organized as follows: Section
\ref{sec:discretization} deals with the finite element discretization of the
equation (\ref{eq:sigular}). In Section \ref{sec:preconditioners}, we
describe both the AMG and BDDC preconditioners that are used 
in solving the system of finite element equations.
Numerical results are presented and discussed in Section
\ref{sec:nume}. Finally, some conclusions are drawn in Section \ref{sec:con}.  

%
%
\section{Discretization}\label{sec:discretization}
To perform the finite element discretization of the variational form
\eqref{eq:varform}, we introduce conforming finite element spaces
$V_h \subset H_0^1(\Omega)$. Particularly, we use the standard finite
element space $V_h = S_h^1(\Omega_h) \cap H_0^1(\Omega)$ spanned by
continuous and piecewise linear basis functions. These functions are
defined with respect to some admissible decomposition
$\mathcal{T}_h(\Omega)$ of the domain $\Omega$ into shape
regular simplicial finite elements $\tau_\ell$, and are zero on
$\Gamma$. Here, $\overline{\Omega}_h = \bigcup_\ell \overline{\tau}_\ell$,
and $h$ denotes a suitable mesh-size parameter, see, e.g., 
\cite{Braess2007,Steinbach:2008a}. Then the finite element approximation
of \eqref{eq:varform} is to find $u_{\varrho h} \in V_h$ such that
\begin{equation}\label{eqn:distvarform}
  \varrho\int_\Omega \nabla u_{\varrho h}(x )\cdot\nabla v_h(x) \, dx +
  \int_\Omega u_{\varrho h}(x) \, v_h(x) \, dx =
  \int_\Omega \overline{u}(x) \, v_h (x) \, dx 
\end{equation}
for all test functions $v_h\in V_h$.

In \cite{NeuSte20}, the convergence of $u_\varrho$ towards the target
$\overline{u}$, and the error estimates \eqref{eq:errest} and
\eqref{eq:errest H1} were proved.
In the numerical experiments, presented in \cite{NeuSte20}, $u_\varrho$ 
was computed on a very fine grid to minimize the influence of the
discretization. Now we are going to investigate the effects of the
finite element discretization, and the final aim is to provide estimates
of the errors $\| u_{\varrho h} - \overline{u} \|_{L^2(\Omega)}$ and
$\| \nabla (u_{\varrho h} - \overline{u}) \|_{L^2(\Omega)}$
in terms of both $\varrho$ and $h$ for sufficiently smooth target
functions $\overline{u}$.

\begin{theorem} \label{thm:errest}
  Let us assume that $\Omega \subset {\mathbb{R}}^3$ is convex, and that the
  target function satisfies $\overline{u} \in H^1_0(\Omega) \cap H^2(\Omega)$.
  Then there are positive constants $C_0$, $C_1$, $C_2$ and $C_3$ such that the
  error estimates
  \begin{equation}\label{eqn:error}
    \| u_{\varrho h} - \overline{u} \|_{L^2(\Omega)}^2 \leq
    \Big( C_0 h^4 + C_1 \varrho h^2 + C_2  \varrho^2 \Big) \,
    \| \overline{u} \|_{H^2(\Omega)}^2
  \end{equation}
  and
  \begin{equation}\label{eqn:error H1}
    \| \nabla (u_{\varrho h} - \overline{u}) \|_{L^2(\Omega)}^2 \leq
    \Big( C_0 h^4 \varrho^{-1} + C_1 h^2 + C_3  \varrho \Big) \,
    \| \overline{u} \|_{H^2(\Omega)}^2
  \end{equation}
  hold. For the choice $\varrho = h^2$, we therefore have 
  \begin{equation*}
    \| u_{\varrho h} - \overline{u} \|_{L^2(\Omega)} \leq
    c \, h^2 \, \| \overline{u} \|_{H^2(\Omega)} 
    \quad \mbox{and} \quad 
    \| \nabla (u_{\varrho h} - \overline{u}) \|_{L^2(\Omega)} \leq
    \tilde{c} \, h \, \| \overline{u} \|_{H^2(\Omega)}.
  \end{equation*}
\end{theorem}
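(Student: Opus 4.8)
The plan is to split the total error via the triangle inequality into the regularization error $u_\varrho - \overline{u}$ and the discretization error $u_{\varrho h} - u_\varrho$, and to estimate each piece separately. For the regularization error we simply invoke the estimates \eqref{eq:errest} and \eqref{eq:errest H1} from \cite{NeuSte20} with $s=2$, which give $\|u_\varrho - \overline{u}\|_{L^2(\Omega)} \le c\,\varrho\,\|\overline{u}\|_{H^2(\Omega)}$ and $\|\nabla(u_\varrho - \overline{u})\|_{L^2(\Omega)} \le c\,\varrho^{1/2}\,\|\overline{u}\|_{H^2(\Omega)}$; these account for the $C_2\varrho^2$ and $C_3\varrho$ terms, respectively. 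So the work is concentrated in the finite element error $u_{\varrho h} - u_\varrho$ for the weak problem \eqref{eq:varform}–\eqref{eqn:distvarform}.

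For the discretization error I would proceed by a Céa-type argument in the energy norm $\vertiii{v}^2 := \varrho\,\|\nabla v\|_{L^2(\Omega)}^2 + \|v\|_{L^2(\Omega)}^2$ associated with the bilinear form in \eqref{eq:varform}. Galerkin orthogonality gives $\vertiii{u_{\varrho h}-u_\varrho} \le \vertiii{u_\varrho - v_h}$ for all $v_h \in V_h$; choosing $v_h = I_h u_\varrho$ the nodal (or quasi-)interpolant and using the standard interpolation estimates $\|u_\varrho - I_h u_\varrho\|_{L^2(\Omega)} \le c\,h^2\,\|u_\varrho\|_{H^2(\Omega)}$ and $\|\nabla(u_\varrho - I_h u_\varrho)\|_{L^2(\Omega)} \le c\,h\,\|u_\varrho\|_{H^2(\Omega)}$ yields $\vertiii{u_{\varrho h}-u_\varrho}^2 \le c\,(\varrho h^2 + h^4)\,\|u_\varrho\|_{H^2(\Omega)}^2$. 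This already produces the $C_1 \varrho h^2$ term for the $L^2$ estimate and the $C_1 h^2$ and $C_0 h^4\varrho^{-1}$ terms (after dividing the $h^4$ contribution by $\varrho$) for the $H^1$ estimate. To get the sharper $C_0 h^4$ term in the $L^2$ bound \eqref{eqn:error}, one needs an Aubin–Nitsche duality argument for $\|u_{\varrho h}-u_\varrho\|_{L^2(\Omega)}$: test the dual problem $-\varrho\Delta w + w = u_{\varrho h}-u_\varrho$, use $H^2$-regularity on the convex domain $\Omega$, and combine with Galerkin orthogonality; the duality gains one power of $h$ on the gradient factor, turning $h^2 \cdot h^2$ into $h^4$ and $\varrho^{1/2}h \cdot h^2$ into $\varrho^{1/2} h^3$, both of which are absorbed into $C_0 h^4 + C_1 \varrho h^2$.

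The one genuinely delicate point is controlling $\|u_\varrho\|_{H^2(\Omega)}$ uniformly in $\varrho$. Naively, elliptic regularity for \eqref{eq:sigular} on a convex domain gives $\|u_\varrho\|_{H^2(\Omega)} \le c\,\varrho^{-1}\|\overline{u}\|_{L^2(\Omega)}$, which blows up as $\varrho \to 0$ and would destroy the estimates. Instead I would exploit the structure: from \eqref{eq:sigular}, $\varrho\,\|\Delta u_\varrho\|_{L^2(\Omega)} = \|\overline{u} - u_\varrho\|_{L^2(\Omega)} \le c\,\varrho\,\|\overline{u}\|_{H^2(\Omega)}$ by the regularization estimate, hence $\|\Delta u_\varrho\|_{L^2(\Omega)} \le c\,\|\overline{u}\|_{H^2(\Omega)}$, and then $H^2$-regularity of the Dirichlet Laplacian on the convex domain gives $\|u_\varrho\|_{H^2(\Omega)} \le c\,\|\Delta u_\varrho\|_{L^2(\Omega)} \le c\,\|\overline{u}\|_{H^2(\Omega)}$ with a $\varrho$-independent constant. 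Feeding this bound back into the interpolation and duality estimates above yields \eqref{eqn:error} and \eqref{eqn:error H1}. Finally, setting $\varrho = h^2$ in both estimates collapses the three terms to $O(h^4)$ in the $L^2$ bound and $O(h^2)$ in the $H^1$ bound, and taking square roots gives the stated rates $c\,h^2$ and $\tilde c\,h$.
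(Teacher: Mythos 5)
Your decomposition and all the essential ingredients coincide with the paper's proof: the triangle inequality splitting into regularization and discretization errors, the regularization estimates \eqref{eq:errest} and \eqref{eq:errest H1} with $s=2$, a C\'ea estimate in the weighted norm $\varrho\,\|\nabla v\|_{L^2(\Omega)}^2+\|v\|_{L^2(\Omega)}^2$ combined with nodal interpolation, and---crucially---the $\varrho$-uniform bound $|u_\varrho|_{H^2(\Omega)}\le c\,\|\overline{u}\|_{H^2(\Omega)}$ obtained from $-\varrho\Delta u_\varrho=\overline{u}-u_\varrho$, convexity of $\Omega$, and \eqref{eq:errest}; this last step is exactly the paper's argument, and it is indeed the delicate point. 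The one place you deviate is the Aubin--Nitsche step, and it is both unnecessary and, as sketched, not $\varrho$-robust. It is unnecessary because the left-hand side of the C\'ea estimate already contains $\|u_\varrho-u_{\varrho h}\|_{L^2(\Omega)}^2$ with weight one, so the bound $c\,(\varrho h^2+h^4)\,\|u_\varrho\|_{H^2(\Omega)}^2$ directly delivers the $C_0h^4+C_1\varrho h^2$ contribution to \eqref{eqn:error}: the $h^4$ term comes from the $L^2$ interpolation error, no duality required, which is precisely how the paper proceeds. It is not $\varrho$-robust because the dual solution of $-\varrho\Delta w+w=g$ satisfies only $\varrho\,|w|_{H^2(\Omega)}\le c\,\|g\|_{L^2(\Omega)}$, so the factor gained by duality is of the form $(\varrho^{1/2}h+h^2)\,\varrho^{-1}$, which degenerates as $\varrho\to0$ rather than yielding a clean extra power of $h$ (compare the $\min\{1,h^2/\varrho\}$-type results of Schatz and Wahlbin cited in the introduction). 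Dropping that detour, your argument is the paper's proof.
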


\begin{proof}
  When subtracting the Galerkin variational formulation 
  \eqref{eqn:distvarform} from \eqref{eq:varform} for test functions
  $v_h \in V_h$ this gives the Galerkin orthogonality
  \[
    \varrho \int_\Omega \nabla (u_\varrho - u_{\varrho h}) \cdot \nabla v_h \, dx
    + \int_\Omega (u_\varrho - u_{\varrho h}) \, v_h \, dx = 0
    \quad \forall v_h \in V_h.
  \]
  Therefore, 
  \begin{eqnarray*}
    && \varrho \, \| \nabla(u_\varrho-u_{\varrho h})\|^2_{L^2(\Omega)} +
       \| u_\varrho-u_{\varrho h} \|^2_{L^2(\Omega)} \\
    && \hspace*{2mm}
       = \varrho \int_\Omega \nabla (u_\varrho - u_{\varrho h}) \cdot
       \nabla (u_\varrho - u_{\varrho h}) \, dx + \int_\Omega
       (u_\varrho - u_{\varrho h}) \, (u_\varrho - u_{\varrho h}) \, dx\\
    && \hspace*{2mm}
       = \varrho \int_\Omega \nabla (u_\varrho - u_{\varrho h}) \cdot
       \nabla (u_\varrho - v_h) \, dx + \int_\Omega
       (u_\varrho - u_{\varrho h}) \, (u_\varrho - v_h) \, dx \\
    && \hspace*{2mm}
       \leq \, \varrho \, \| \nabla (u_\varrho - u_{\varrho h}) \|_{L^2(\Omega)}
       \| \nabla (u_\varrho - v_h)\|_{L^2(\Omega)} +
       \| u_\varrho - u_{\varrho h} \|_{L^2(\Omega)}
       \| u_\varrho - v_h \|_{L^2(\Omega)} \\
    && \hspace*{2mm}
       \leq \sqrt{\varrho \, \| \nabla(u_\varrho-u_{\varrho h})\|^2_{L^2(\Omega)}
       + \| u_\varrho-u_{\varrho,h} \|^2_{L^2(\Omega)}} \\
    && \hspace*{20mm}
       \cdot \sqrt{ \varrho \, \| \nabla(u_\varrho-v_h)\|^2_{L^2(\Omega)} +
       \| u_\varrho-v_h \|^2_{L^2(\Omega)}}
\end{eqnarray*}
follows, i.e., we have Cea's estimate
\[
  \varrho \, \| \nabla(u_\varrho-u_{\varrho h})\|^2_{L^2(\Omega)} +
  \| u_\varrho-u_{\varrho h} \|^2_{L^2(\Omega)} \leq
  \varrho \, \| \nabla(u_\varrho-v_h)\|^2_{L^2(\Omega)} +
  \| u_\varrho-v_h \|^2_{L^2(\Omega)} 
\]
for all $v_h \in V_h$. Inserting the Lagrangian interpolation
$v_h = I_h u_\varrho$ of $u_\varrho$, and using standard interpolation
error estimates, we arrive  at the error estimate
\begin{eqnarray}\label{eqn:diserror}\nonumber
  \varrho \, \| \nabla(u_\varrho-u_{\varrho h})\|^2_{L^2(\Omega)} +
  \| u_\varrho-u_{\varrho h} \|^2_{L^2(\Omega)}
  & \leq & c_1 \, \varrho \, h^2 \, |u_\varrho|^2_{H^2(\Omega)} +
           c_2 \, h^4 \, |u_\varrho|^2_{H^2(\Omega)} \\
  & = & \left[ c_1 \, \varrho + c_2 \, h^2 \right]
        \, h^2 \,  |u_\varrho|^2_{H^2(\Omega)},
\end{eqnarray}
where the positive constants $c_1$ and $c_2$ are nothing but the constants 
in the $H^1$ and $L^2$ interpolation error estimates; 
see, e.g., \cite{Braess2007,Steinbach:2008a}. Note that, under the
assumptions made, we have $u_\varrho \in H_0^1(\Omega) \cap H^2(\Omega)$.
Due to
\[
- \varrho \Delta u_\varrho = \overline{u} - u_\varrho,
\]
we now conclude
\begin{equation}\label{eqn:coercitivity}
  |u_\varrho|^2_{H^2(\Omega)} \leq
  c_3 \, \frac{1}{\varrho^2} \, \| \overline{u} - u_\varrho \|^2_{L^2(\Omega)}
\end{equation}
with the positive $H^2$ coercivity constant $c_3$.
Since we assume $\overline{u} \in H^1_0(\Omega) \cap H^2(\Omega)$,
we can use \eqref{eq:errest} for $s=2$, i.e.,
\begin{equation}\label{eqn:errestNeuSte20}
  \|\overline{u} - u_\varrho \|_{L^2(\Omega)} \leq
  c_4 \, \varrho \, \| \overline{u} \|_{H^2(\Omega)} .
\end{equation}
For less regular $\overline{u}$, we have a reduced order in $\varrho$;
see Theorem 3.2 in \cite{NeuSte20} as well as \eqref{eq:errest}.
Combining \eqref{eqn:coercitivity} and \eqref{eqn:errestNeuSte20}, 
we get
\begin{equation}\label{eqn:uvarrhoubar}
  |u_\varrho|^2_{H^2(\Omega)} \leq c_3 c_4^2 \, \| \overline{u} \|^2_{H^2(\Omega)}.
\end{equation}
Inserting \eqref{eqn:uvarrhoubar} into \eqref{eqn:diserror} this gives
\begin{equation*}
  \varrho \, \| \nabla(u_\varrho-u_{\varrho h})\|^2_{L^2(\Omega)} +
  \| u_\varrho-u_{\varrho h} \|^2_{L^2(\Omega)} \leq 
  c_3 c_4^2 \, \left[ c_1 \, \varrho + c_2 \, h^2 \right]
  \, h^2 \,  \| \overline{u} \|^2_{H^2(\Omega)}.
\end{equation*}
Using the triangle inequality, and again \eqref{eqn:errestNeuSte20},
we arrive at
\begin{eqnarray*}
  \| u_{\varrho h} - \overline{u} \|_{L^2(\Omega)}^2
  & \leq & \Big[
           \| u_{\varrho h} - u_\varrho \|_{L^2(\Omega)} +
           \| u_\varrho - \overline{u} \|_{L^2(\Omega)} \Big]^2 \\
  & \leq & 2 \, \Big[ \| u_{\varrho h} - u_\varrho \|^2_{L^2(\Omega)} +
           \| u_\varrho - \overline{u} \|^2_{L^2(\Omega)}
           \Big] \\
  & \leq & 2 \, c_3 c_4^2\,
           \left[ c_1 \, \varrho + c_2 \, h^2 \right]
           \, h^2 \,  \| \overline{u} \|^2_{H^2(\Omega)} +
           2 c_4^2 \, \varrho^2 \, \| \overline{u} \|^2_{H^2(\Omega)} ,
\end{eqnarray*}
that is nothing but \eqref{eqn:diserror} with
$C_0 = 2c_2c_3c_4^2$, $C_1 = 2c_1c_3c_4^2$, and $C_2 = 2 c_4^2$.

Moreover, we also have
\begin{equation*}
  \| \nabla(u_\varrho-u_{\varrho h})\|^2_{L^2(\Omega)} 
  \leq 
  c_3 c_4^2 \, \left[ c_1 \, \varrho + c_2 \, h^2 \right]
  \, h^2 \, \varrho^{-1} \, \| \overline{u} \|^2_{H^2(\Omega)}.
\end{equation*}
Now, using \eqref{eq:errest H1} for $s=2$, we finally obtain
  \begin{eqnarray*}
  \| \nabla (u_{\varrho h} - \overline{u}) \|_{L^2(\Omega)}^2
  & \leq & \Big[
           \| \nabla (u_{\varrho h} - u_\varrho) \|_{L^2(\Omega)} +
           \| \nabla (u_\varrho - \overline{u}) \|_{L^2(\Omega)} \Big]^2 \\
  & \leq & 2 \, \Big[ \| \nabla (u_{\varrho h} - u_\varrho) \|^2_{L^2(\Omega)} +
           \| \nabla (u_\varrho - \overline{u}) \|^2_{L^2(\Omega)}
           \Big] \\
  & \leq & 2 \, c_3 c_4^2\,
           \left[ c_1 \, \varrho + c_2 \, h^2 \right]
           \, h^2 \, \varrho^{-1} \, \| \overline{u} \|^2_{H^2(\Omega)} +
           2 c_5^2 \, \varrho \, \| \overline{u} \|^2_{H^2(\Omega)} ,
\end{eqnarray*}
which is \eqref{eqn:error H1} with $C_3 = 2 c_5^2$.
\end{proof}

\noindent
The finite element space $V_h = \mbox{span}\{\varphi_1,\ldots,\varphi_{N_h}\}$ 
is spanned by the standard nodal Courant basis functions
$\{\varphi_1,\ldots,\varphi_{N_h}\}$. Once the basis is chosen, the finite
element scheme \eqref{eqn:distvarform} is equivalent to the system of
finite element equations
\begin{equation}\label{eq:linsys}
  A \underline{u} = \underline{f},
\end{equation}
where the system matrix $A=\varrho K + M$ consists of the scaled stiffness
matrix $K$ and the mass matrix $M$, respectively. The matrix entries and the
entries of the right-hand side $\underline{f}$ are defined by
\[
K_{ij} = \int_\Omega \nabla \varphi_j \cdot \nabla \varphi_i \, dx, \quad 
M_{ij} = \int_\Omega  \varphi_j \, \varphi_i \,dx, \quad
f_i = \int_\Omega  \overline{u} \, \varphi_i\,dx
\]
for $i,j=1,\ldots,N_h$. The solution
$\underline{u} = (u_j) \in \mathbb{R}^{N_h}$ of \eqref{eq:linsys}
contains the unknown coefficients of the finite element solution
\[
  u_{\varrho h} = \sum_{j=1}^{N_h} u_j \varphi_j \in V_h
\]
of \eqref{eqn:distvarform}. Robust and fast iterative solvers for
\eqref{eq:linsys} will be considered in the next section.

%
%
\section{AMG and BDDC Preconditioners}\label{sec:preconditioners}
In this section, we present two precondioners that lead to robust and
efficient solvers for the system \eqref{eq:linsys} of finite element equations
when applying a preconditioned conjugate gradient algorithm. 
The BDDC preconditioner is especially suited for parallel computers.

\subsection{The AMG preconditioner}\label{sec:amg}
In constrast to geometric multigrid methods, e.g, \cite{WHackbusch1985},
usually relying on underlying hierarchical meshes, algebraic multigrid 
methods \cite{Brandt1985,Briggs2000,Ruge1987,XUZikatanov2017} are more
flexible with respect to complex geometries, adaptive mesh refinements,
and so on. We refer to \cite{HaaseLanger2002} for a comparison between
these two methods. In this work, the particular AMG
preconditioner developed in \cite{KF1998} in combination with the 
conjugate gradient (CG) method is adopted to solve the discrete 
system. This AMG method was originally developed for second-order
elliptic problems. It has been extended to act as a robust preconditioner to 
systems arising from the discretization of coupled vector field problems,
e.g., to fluid and elasticity problems in fluid-structure interaction solvers 
\cite{Langer2016,YangZulehner2011}. More recently, in \cite{Steinbach2018}, 
by choosing a proper smoother \cite{Popa2008}, it was also used as a
preconditioner for the non-symmetric and positive definite system that
arises from the Petrov-Galerkin space-time finite element discretization
of the heat equation \cite{OS15}.  

In this AMG method, the coarsening strategy is based on a simple red-black
colouring algorithm \cite[Algorithm 2]{KF1998}: Pick up an unmarked degree
of freedom (Dof), and mark it black (coarse); all neighboring Dofs are marked
in red (fine); repeat this procedure until all Dofs are visited. Afterwards,
the fine Dofs (red) are interpolated by an average over their neighboring
coarse Dofs (black), which defines a linear interpolation operator $P$.
The coarse grid operator is constructed by Galerkin's method, i.e.,
$A_c=P^\top A P$. Assume that we apply $m$ pre- and post-smoothing steps, the
iteration operator for the two grid method is given by
\[
  M_{amg} := {\mathcal S}^m ( I -P A_c^{-1}P^\top A){\mathcal S}^m,
\]
where ${\mathcal S}$ is the iteration matrix for a smoothing step. For such
a symmetric and positive definite system, one sweep of the symmetric
Gauss–Seidel method is used, that is, one forward on the
down cycle together with one backward on the up cycle. 
Then, we may formulate the two-grid
preconditioner $P_{AMG}$ as follows (with $m=1$):  
\begin{equation}\label{eq:amgpre}
  P_{AMG}^{-1}:={\mathcal S} P A_c^{-1} P^\top {\mathcal S} .
\end{equation}
Note that, in the coarse grid correction step, one may apply AMG
recursively; see \cite{Brandt1985,Briggs2000,Ruge1987}, i.e.,
we replace $A_c^{-1}$ by AMG iterations starting with a zero initial guess.  
In connection with optimal control problems as considered in this paper,
we are especially interested in small $\varrho\in (0, 1]$. 
When $\varrho\to 0$, the mass matrix $M$ becomes dominant in $A$.
For the mass matrix, the condition number is ${\mathcal O}(1)$, which even
makes the system easier to solve as observed from our numerical tests.  

\subsection{The BDDC preconditioner}\label{sec:bddc}
For the (two-level) BDDC preconditioner, we first make a reordering of the
system (\ref{eq:linsys}), i.e.,
\begin{equation}\label{eq:dmpsys}
  \tilde{A}\tilde{\underline{u}}=\tilde{f},
\end{equation}
where
\begin{equation*}
\tilde{A}:=
\begin{bmatrix}
  \tilde{A}_{II} & \tilde{A}_{IC} \\
  \tilde{A}_{C I} & \tilde{A}_{CC} 
\end{bmatrix},
\; \tilde{\underline{u}}:=
\begin{bmatrix}
  \tilde{\underline{u}}_I\\
  \tilde{\underline{u}}_C 
\end{bmatrix},
\; \tilde{f}:=\begin{bmatrix}
  \tilde{f}_I\\
  \tilde{f}_C
\end{bmatrix}, 
\; \mbox{and}\,\,
\tilde{A}_{II}=\text{diag}\left[\tilde{A}_{II}^1,...,\tilde{A}_{II}^{p}\right].
\end{equation*}
Here, $p$ represents the number of polyhedral subdomains $\Omega_i$, as well
as the number of cores. These subdomains $\Omega_i$ are obtained from the
graph partitioning tool \cite{Karypis1998} and 
a non-overlapping domain decomposition of $\Omega$ \cite{Widlund2005}. 
Following common notations, the
degrees of freedom in (\ref{eq:dmpsys}) are respectively decomposed into 
internal ($I$) and interface ($C$) Dofs.
Then, we arrive at the following Schur complement system which is defined
on the interface $\Gamma_C$:
\begin{equation}\label{eq:schur}
  S_C \underline{\tilde{u}}_C = \underline{g},
\end{equation}
where
\[
  S_C := \tilde{A}_{CC} - \tilde{A}_{C I}\tilde{A}_{II}^{-1}\tilde{A}_{IC},
  \quad
  \underline{g} := \underline{\tilde{f}}_C - \tilde{A}_{C I}
  \tilde{A}_{II}^{-1}\underline{\tilde{f}}_I.
\]
Now, following similar notations as in \cite{Dohrmann2003,Jing2006,
  MandelDohrman2003,Mandel2005}, the BDDC preconditioner $P_{BDDC}$ for
(\ref{eq:schur}) is formed as
\[
  P_{BDDC}^{-1}=R_C^\top(T_{sub}+T_0)R_C,
\] 
where the operator $R_C$ represents the direct sum of
restriction operators $R_C^i$ that map the global interface vector
to its components on a local interface
$\Gamma_i:=\partial \Omega_i\cap \Gamma_C$ with a proper scaling.
Furthermore, the coarse level correction operator $T_0$ is defined by
\begin{equation}\label{eq:coarsecorr}
  T_0 = \Phi (\Phi^\top S_C \Phi)^{-1} \Phi^\top,
\end{equation}
where $\Phi=\left[(\Phi^1)^\top,\ldots, (\Phi^N)^\top\right]^\top$ is 
the matrix of the coarse level basis functions.
Each basis function matrix $\Phi^i$ on the subdomain
interface $\Gamma_i$ is computed by solving the augmented system:
\begin{equation}
  \begin{aligned}
    \begin{bmatrix}
      S^i_C & \left(C^i\right)^\top \\
      C^i & 0
    \end{bmatrix}
    \begin{bmatrix}
      \Phi^i\\
      \Lambda^i\\
    \end{bmatrix}
    =
    \begin{bmatrix}
      0\\
      R_{\Pi}^i\\
    \end{bmatrix}
  \end{aligned},
\end{equation}
where $S^i_C$ is nothing but the local Schur complement, $C^i$ denotes the
given primal constraints of the subdomain $\Omega_i$, and each column of
$\Lambda^i$ contains the vector of Lagrange multipliers. The number
of columns of each $\Phi^i$ is equal to the number of global coarse level
degrees of freedom, usually living on the subdomain corners, and/or
interface edges, and/or faces. Moreover, the restriction operator $R_{\Pi}^i$
maps the global interface vector in the continuous primal variable space on
the coarse level to its component on $\Gamma_i$. We mention that these
corners/edges/faces on the coarse space may be characterized as objects from a
pure algebraic manner; see, e.g., \cite{Badia2013}. We have recently used this
abstract method in \cite{Langer2020} for constructing BDDC preconditioners in
solving the linear system that arises from the space-time finite element
discretization of parabolic equations \cite{Langer2019}.  

The subdomain correction operator $T_{sub}$ is then defined as 
\[
  T_{sub}=\displaystyle\sum_{i=1}^N\left[(R_C^i)^\top \quad 0\right]
  \begin{bmatrix}
     S_C^i & \left(C^i\right)^\top \\
     C^i & 0
  \end{bmatrix}^{-1}
  \begin{bmatrix}
     R_C^i\\
    0
  \end{bmatrix},
\]
with vanishing primal variables on all the coarse levels. Here, the restriction
operator $R_C^i$ maps global interface vectors to their components on
$\Gamma_i$. Note that this two-level method can be extended to a multi-level
method when the coarse problem becomes too large; see, e.g.,
\cite{Badia2016,ZampiniTu2017}. This means that we may apply the BDDC method
to the inversion of $\Phi^\top S_C \Phi$
in the coarse problem \eqref{eq:coarsecorr} recursively.

%
%
\section{Numerical experiments}\label{sec:nume}
In all numerical examples presented in this section, we consider the 
unit cube $\Omega=(0, 1)^3$ as computational domain.
In the first example, we choose a smooth target, whereas a discontinuous 
target is used in the second example. The first example is covered 
by Theorem~\ref{thm:errest}, the second not. Finally, we consider an example
with a smooth target that violates the homogeneous Dirichlet boundary
conditions. The system \eqref{eq:linsys} of finite element equations is solved 
by a preconditioned conjugate gradient (PCG) method with both AMG and BDDC
preconditioners. The PCG iteration is stopped when the relative residual
error of the preconditioned system reaches $\varepsilon=10^{-8}$. In the
AMG method, we have applied one $V$-cycle multigrid preconditioner with one
pre- and post-smoothing step, respectively. We run the AMG preconditioned CG
on the shared memory supercomputer
MACH-2\footnote{https://www3.risc.jku.at/projects/mach2/}. 
The BDDC preconditioned CG is performed on the high-performance 
distributed memory computing cluster
RADON-1\footnote{https://www.ricam.oeaw.ac.at/hpc/}.
    
\subsection{Smooth target}\label{subsec:SmoothTarget}
In the first example, we consider the smooth function
\[
  \bar{u} (x_1,x_2,x_3) =
  - \varrho \Delta u_\varrho + u_\varrho =
  \sin(\pi x_1 )\sin(\pi x_2)\sin(\pi x_3)
\]
as target, which was computed from the manufactured solution 
\[
  u_\varrho(x_1,x_2,x_3) =
  (3\varrho\pi^2+1)^{-1}\sin(\pi x_1 )\sin(\pi x_2)\sin(\pi x_3)
\]
of the reduced optimality equation (\ref{eq:sigular}).
Then the adjoint state 
\[
  p_\varrho(x_1,x_2,x_3) =
  - \frac{u_\varrho(x_1,x_2,x_3)}{\varrho (3\varrho\pi^2+1)}  =
  - \frac{1}{\varrho(3\varrho\pi^2+1)} \sin(\pi x_1 )
  \sin(\pi x_2)\sin(\pi x_3)
\]
results from the gradient equation, and the optimal control 
\[
  z_\varrho(x_1,x_2,x_3) = -\Delta u_\varrho(x_1,x_2,x_3)
  = \frac{3\pi^2}{3\varrho\pi^2+1}\sin(\pi x_1 )\sin(\pi x_2)\sin(\pi x_3)
\]
from the state equation. To check the convergence in the $L^2(\Omega)$ and
$H^1(\Omega)$ norms, we first compute the discretization errors
$\|u_\varrho - u_{\varrho h}\|_{L^2(\Omega)}$ and 
$\| \nabla (u_\varrho - u_{\varrho h}) \|_{L^2(\Omega)}$
for finer and finer mesh sizes $h$
and $6$ different values of the regularization parameter $\varrho$;
see Table~\ref{tab:ex1conl2} and Table~\ref{tab:ex1conh1}, respectively. 
Since the given solution $u_\varrho$ is smooth, we observe optimal
convergence rates with respect to the mesh size $h$.
Furthermore, the convergence does not deteriorate with respect to the
regularization parameter $\varrho$.
In Table~\ref{tab:ex1tarcon}, we numerically study the convergence of the
approximation $u_{\varrho h}$ to the target $\bar{u}$ in 
the $L^2(\Omega)$ norm for decreasing $\varrho$ 
by choosing $h=\varrho^{1/2}$ as predicted by Theorem~\ref{thm:errest}. From
Table~\ref{tab:ex1tarcon}, we clearly see a second-order convergence. We
further observe a second-order convergence in the $H^{-1}(\Omega)$ norm; 
see also \cite[Table 5]{NeuSte20}.  

\begin{table}[h]
  \centering
  \begin{tabular}{|r|rr|rr|rr|}
      \hline
      \diagbox{$h$}{$\varrho$}&  $10{^0}$ & eoc  & $10{^{-2}}$ &eoc &  $10{^{-4}}$ & eoc \\ \hline
      $2^{-2}$&$3.1863$e$-3$ &      &$3.7710$e$-2$ &  &$3.9112$e$-2$ &  \\
      $2^{-3}$&$9.0214$e$-4$ &$1.82$ &$8.9461$e$-3$ & $2.08$  &$8.5855$e$-3$ &$2.19$   \\
      $2^{-4}$&$2.2924$e$-4$ &$1.98$ &$2.1970$e$-3$ & $2.03$ &$2.0370$e$-3$ &$2.08$ \\
      $2^{-5}$&$5.7222$e$-5$ &$2.00$ &$5.5283$e$-4$ & $1.99$ &$5.0177$e$-4$ &$2.02$  \\
      $2^{-6}$&$1.4301$e$-5$ &$2.00$ &$1.4011$e$-4$ & $1.98$ &$1.2727$e$-4$&$1.98$\\
      $2^{-7}$&$3.5796$e$-6$ &$2.00$ &$3.5473$e$-5$ & $1.98$ &$3.2775$e$-5$&$1.96$\\
      $2^{-8}$&$8.9633$e$-7$ &$2.00$&$8.9566$e$-6$ &  $1.99$ &$8.4684$e$-6$&$1.95$ \\ \hline 
      \diagbox{$h$}{$\varrho$}&  $10{^{-6}}$ & eoc & $10^{-8}$ & eoc & $10^{-10}$ & eoc\\ \hline
      $2^{-2}$&$3.9223$e$-2$ &  & $3.9224$e$-2$  &  &   $3.9224$e$-2$ & \\
      $2^{-3}$&$8.6079$e$-3$ &$2.19$  &$8.6082$e$-3$  & $2.19$ & $8.6082$e$-3$& $2.19$  \\
      $2^{-4}$&$2.0355$e$-3$ &$2.08$ &$2.0356$e$-3$  &$2.08$  &$2.0356$e$-3$   &$2.08$ \\
      $2^{-5}$&$4.9380$e$-4$ &$2.04$ &$4.9381$e$-4$  &$2.04$  & $4.9381$e$-4$  &$2.04$  \\
      $2^{-6}$&$1.2136$e$-4$  &$2.02$ &$1.2133$e$-4$ & $2.03$ & $1.2133$e$-4$&$2.03$  \\
      $2^{-7}$&$3.0081$e$-5$& $2.01$ &$3.0021$e$-5$ & $2.01$& $3.0021$e$-5$&$2.01$\\
      $2^{-8}$&$7.5374$e$-6$&$2.00$&$7.4604$e$-6$ & $2.01$&$7.4619$e$-6$&$2.01$\\ \hline
  \end{tabular}
  \caption{Example~\ref{subsec:SmoothTarget}: Error $\|u_\varrho - u_{\varrho h}\|_{L^2(\Omega)}$ with respect to $h$ 
  (columns), and fixed, but different $\varrho$ (rows).} 
  \label{tab:ex1conl2}
\end{table}

\begin{table}[h]
  \centering
  \begin{tabular}{|r|rr|rr|rr|}
     \hline
      \diagbox{$h$}{$\varrho$}&  $10{^0}$ & eoc  & $10{^{-2}}$ &eoc&  $10{^{-4}}$ & eoc  \\ \hline
      $2^{-2}$ &$3.2364$e$-2$ &      & $8.2832$e$-1$&  &$1.1721$e$-0$ &\\
      $2^{-3}$ &$1.71297$e$-2$ &$0.92$& $4.1447$e$-1$& $1.00$&$5.5195$e$-1$ &$1.09$  \\
      $2^{-4}$ &$8.6228$e$-3$ &$0.99$ &$2.0490$e$-1$& $1.02$&$2.6817$e$-1$ &$1.04$ \\
      $2^{-5}$ &$4.3068$e$-3$ &$1.00$ & $1.0187$e$-1$& $1.01$&$1.3232$e$-1$ &$1.02$  \\
      $2^{-6}$ &$2.1527$e$-3$ &$1.00$ & $5.0859$e$-2$& $1.00$&$6.5832$e$-2$ &$1.00$\\
      $2^{-7}$ &$1.0771$e$-3$ &$1.00$ & $2.5439$e$-2$& $1.00$&$3.2889$e$-2$&$1.00$\\
      $2^{-8}$ &$5.3905$e$-4$ &$1.00$& $1.2731$e$-2$& $1.00$ &$1.6453$e$-2$&$1.00$ \\ \hline 
      \diagbox{$h$}{$\varrho$}&  $10{^{-6}}$ & eoc & $10^{-8}$ & eoc & $10^{-10}$ & eoc\\ \hline
      $2^{-2}$ &$1.1781$e$-0$ &  & $1.1781$e$-0$ & & $1.1781$e$-0$ &  \\
      $2^{-3}$ &$5.5443$e$-1$ &$1.09$  &$5.5446$e$-1$  & $1.09$ & $5.5446$e$-1$ & $1.09$   \\
      $2^{-4}$ &$2.7032$e$-1$ &$1.04$ &$2.7036$e$-1$  &$1.04$ & $2.7036$e$-1$  & $1.04$ \\
      $2^{-5}$ &$1.3392$e$-1$ &$1.01$ & $1.3397$e$-1$ &$1.01$ & $1.3397$e$-1$  & $1.01$ \\
      $2^{-6}$ &$6.6689$e$-2$ &$1.01$ &$6.6787$e$-2$  &$1.00$ & $6.6789$e$-2$& $1.00$\\
      $2^{-7}$ &$3.3237$e$-2$ &$1.00$ &$3.3367$e$-2$  &$1.00$ & $3.3369$e$-2$& $1.00$\\
      $2^{-8}$ & $1.6566$e$-2$&$1.00$& $1.6683$e$-2$ &$1.00$ &$1.6687$e$-2$&$1.00$ \\  \hline
  \end{tabular}
  \caption{Example~\ref{subsec:SmoothTarget}: Error
    $\| \nabla (u_\varrho - u_{\varrho h}) \|_{L^2(\Omega)}$ with respect to $h$ 
  (columns), and fixed, but different $\varrho$ (rows).} 
  \label{tab:ex1conh1}
\end{table}

\begin{table}[h]
  \centering
  \begin{tabular}{|l|ll|ll|}
      \hline
      $\varrho$ &  $\|\bar{u} - u_{\varrho h}\|_{H^{-1}(\Omega)}^2$ & eoc &  $\|\bar{u} - u_{\varrho h}\|_{L^2(\Omega)}^2$ & eoc \\ \hline
      $10{^0}$ & $3.54886$e$-3$ & & $1.16966$e$-1$ &  \\      
      $10{^{-1}}$ &$2.11939$e$-3$  &$0.22$  &$6.98529$e$-2$ &$0.22$   \\
      $10{^{-2}}$ & $1.97952$e$-4$ & $1.03$ & $6.52427$e$-3$ &$1.03$  \\
      $10{^{-3}}$ & $3.13686$e$-6$ & $1.80$ & $1.03388$e$-4$ & $1.80$  \\
      $10{^{-4}}$ & $3.30578$e$-8$ & $1.98$ & $1.08962$e$-6$ & $1.98$ \\
      \hline
  \end{tabular}
  \caption{
    Example~\ref{subsec:SmoothTarget}: 
    Errors $\|\bar{u} - u_{\varrho h}\|_{H^{-1}(\Omega)}^2$ and
    $\|\bar{u} - u_{\varrho h}\|_{L^2(\Omega)}^2$ \newline for
    $\bar{u}\in H^1_0(\Omega) \cap H^2(\Omega)$.}
  \label{tab:ex1tarcon}
\end{table}

\noindent
Table~\ref{tab:ex1amgitertime} 
provides the number of AMG preconditioned
CG iterations, and the  total coarsening and solving time measured in
second (s) with respect to $h$ and $\varrho$. From these numerical results, it
is clear to see the relative robustness of the AMG preconditioner with respect
to the mesh refinement and the decreasing regularization parameter
$\varrho$. The computational time scales well with respect to the number of
unknowns. In Table~\ref{tab:ex1bddcittime}, we study the robustness with
respect to $\varrho$ and parallel performance (strong scaling) of the
BDDC preconditioned CG solver. So, we fix the total number of unknowns
to  $\#Dofs=2,146,689$ that is related to $h=1/128$. We observe that
the numbers of BDDC iterations are rather stable with respect to the
number of subdomains $p$, and the varying regularization parameter $\varrho$. 
Further, we observe perfect strong scalability (checking each row). 
Over-scaling from $32$ to $64$ cores may result from heterogeneity
of the cluster, larger memory consumption for small number of cores, or the
shared node resources with other users. 

\begin{table}[h]
  \centering
  {\footnotesize
    \begin{tabular}{|r|rrrrrr|}
      \hline
      \diagbox{$\varrho$}{$h$}&   $\frac{1}{8}$ & $\frac{1}{16}$ & $\frac{1}{32}$ & $\frac{1}{64}$ &  $\frac{1}{128}$ & $\frac{1}{256}$\\ \hline
      $10{^0}$&8\,(0.01 s)&10\,(0.06 s)&12\,(0.72 s)&14\,(8.29 s)&17\,(127.06 s)&20\,(1420.73 s) \\
      $10{^{-2}}$&6\,(0.01 s)&8\,(0.05 s)&9\,(0.56 s)&11\,(6.83 s)&13\,(94.28 s)&15\,(910.77 s) \\
      $10{^{-4}}$&5\,(0.00 s)&4\,(0.04 s)&4\,(0.38 s)&5\,(4.51 s)&7\,(65.85 s)&8\,(646.14 s)\\
      $10{^{-6}}$&5\,(0.00 s)&5\,(0.04 s)&5\,(0.41 s)&5\,(4.46 s)&4\,(52.90 s)&4\,(499.10 s)\\
      $10{^{-8}}$&5\,(0.00 s)&5\,(0.04 s)&5\,(0.41 s)&5\,(4.46 s)&5\,(57.59 s)&5\,(530.97 s)\\  
      $10{^{-10}}$&5\,(0.00 s)&5\,(0.04 s)&5\,(0.41 s)&5\,(4.43 s)&5\,(57.04 s)&5\,(560.33 s)\\ 
      $10{^{-12}}$&5\,(0.00 s)&5\,(0.04 s)&5\,(0.41 s)&5\,(4.42 s)&5\,(54.63 s)&5\,(603.02 s)   \\
      \hline
  \end{tabular}
  \caption{Example~\ref{subsec:SmoothTarget}: 
    Number of AMG preconditioned CG iterations, total
    coarsening and solving time measured in second (s), with respect to $h$ and $\varrho$.}
    \label{tab:ex1amgitertime}
    }
\end{table}

\begin{table}[h]
  \centering
  \begin{tabular}{|r|rrrr|}
      \hline
      \diagbox{$\varrho$}{$p$}&  $32$ & $64$ & $128$ & $256$  \\ \hline
      $10{^0}$&34\,(46.43 s)&34\,(16.04 s)&35\,(5.92 s)&36\,(2.98 s)\\
      $10{^{-2}}$&32\,(43.84 s)&32\,(15.20 s)&33\,(5.58 s)&34\,(2.70 s)\\
      $10{^{-4}}$&27\,(37.20 s)&27\,(13.12 s)&28\,(4.85 s)&30\,(2.39 s)\\   
      $10{^{-6}}$&20\,(28.01 s)&20\,(9.77 s)&22\,(3.83 s) &22\,(1.87 s)\\ 
      $10{^{-8}}$&20\,(28.15 s)&20\,(9.75 s)&21\,(3.65 s)&21\,(1.76 s)\\  
      $10{^{-10}}$&20\,(28.02 s)&20\,(9.69 s)&21\,(3.63 s)&21\,(1.74 s)\\
      $10{^{-12}}$&20\,(28.01 s)&20\,(9.71 s)&21\,(3.69 s)&21\,(1.67 s)\\
      \hline
  \end{tabular}
  \caption{Example~\ref{subsec:SmoothTarget}: Number of iterations, and time
    measured in second (s) for the BDDC preconditioned CG solver, where
    $h=1/128$ and $\#Dofs=2,146,689$.}
  \label{tab:ex1bddcittime} 
\end{table}

\subsection{Discontinuous target}
\label{subsec:DiscontinuousTarget}

In the second example, we consider a discontinuous target function
\begin{equation}
  \bar{u}(x)=
  \begin{cases}
    1 \;\textup { for }  x\in (0.25, 0.75)^3\\
    0 \;\textup { for }  x\in \Omega\setminus (0.25, 0.75)^3,\\
  \end{cases}
\end{equation}
that is similar to the first example in \cite{NeuSte20}. In this case, we
solve the equation on adaptively refined meshes that are driven by a
residual based error indicator \cite{RVerfurth2013}. 
This leads to a lot of local refinements near the interface where the target
is discontinuous. The energy regularization leads to a control that
concentrates on the interface. As an illustration, we visualize
the adaptive mesh, the state $u_{\varrho h}$, and the control $z_{\varrho h}$ 
in Fig.~\ref{fig:distarfunc}, which are comparable to the results in
\cite[Fig.~3]{NeuSte20}. Moreover, we provide the convergence of the
approximation $u_{\varrho h}$
to the target $\bar{u}$ with respect to the regularization
parameter $\varrho$ in Table~\ref{tab:ex2tarcon}, i.e., an
order $\sqrt{1.5}$ for 
the approximation in $H^{-1}(\Omega)$ and $\sqrt{0.5}$ in $L^2(\Omega)$. 
In this case, since $\bar{u}\in H^{1/2-\varepsilon}(\Omega)$, we obtain
reduced convergence of the finite element approximation to the target; 
see also \cite[Table~1]{NeuSte20}. Due to adaptive refinements for different
regularization parameters $\varrho=10^k$, $k=0,-1,\ldots,-6$, the final
meshes may contain different number of degrees of freedom. In
Fig.~\ref{table:ex2amg}, we visualize the number of AMG preconditioned CG
iterations (left plot) and the computational time in seconds (right plot)
for different values of $\varrho$ during the adaptive refinement procedure. 
We observe that the AMG preconditioned CG iterations stay in a small range
between $5$ and $25$; the computational time scales well with respect to
the number of unknowns. To compare the performance of BDDC
preconditioners with respect to an adaptive refinement, we select
similar numbers of Dofs for all regularization parameters,
i.e., \#Dofs=2,210,254 ($\varrho=10^0$), \#Dofs=2,278,661 ($\varrho=10^{-1}$),
\#Dofs=2,537,773 ($\varrho=10^{-2}$), \#Dofs=1,853,354 ($\varrho=10^{-3}$),
\#Dofs=1,301,825 ($\varrho=10^{-4}$), \#Dofs=1,910,829 ($\varrho=10^{-5}$),
and \#Dofs=1,895,056 ($\varrho=10^{-6}$). In Table~\ref{tab:ex2bddctimeit},
we present the number of BDDC preconditioned CG iterations, and the
corresponding solving time, measured in second (s), with respect to the
number of subdomains $p$ and the regularization parameter $\varrho$. For
such adaptive meshes, we observe relatively good performance in both the
number of BDDC preconditioned CG iterations and the computational time,
as well as good strong scalability with respect to the number of
subdomains $p$.

\begin{figure}[h]
  \centering
  \includegraphics[width=0.34\textwidth]{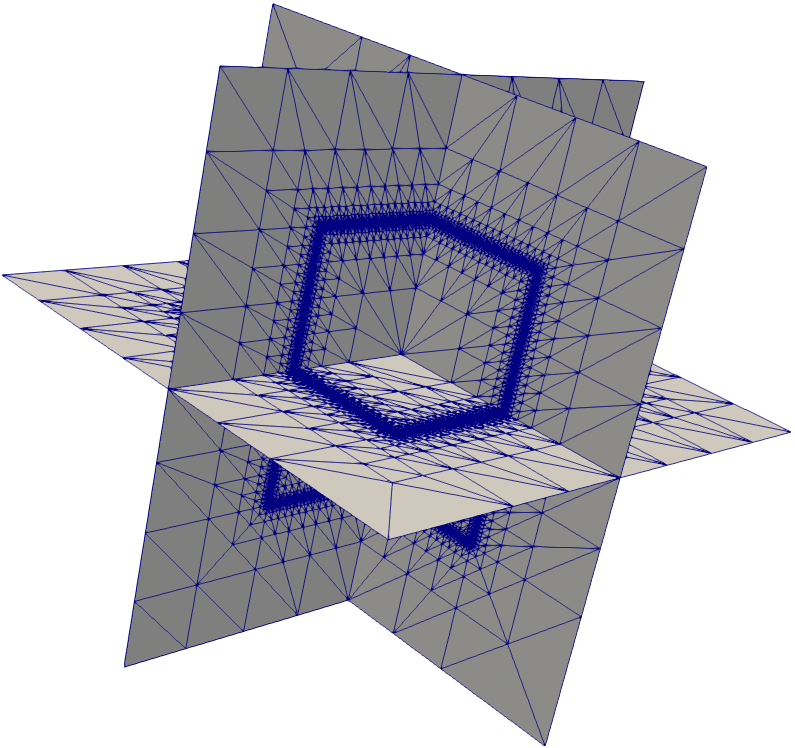}
  \includegraphics[width=0.34\textwidth]{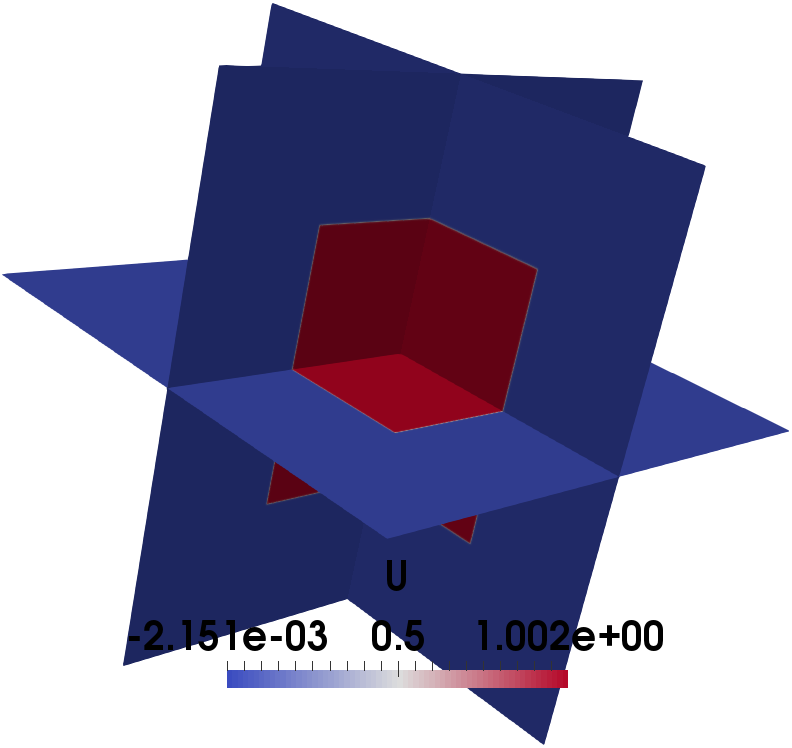}
  \includegraphics[width=0.28\textwidth]{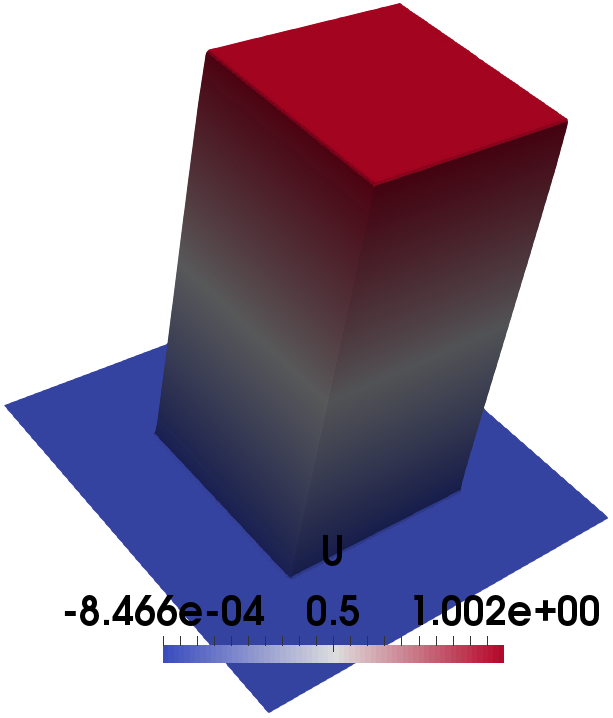}
  \includegraphics[width=0.34\textwidth]{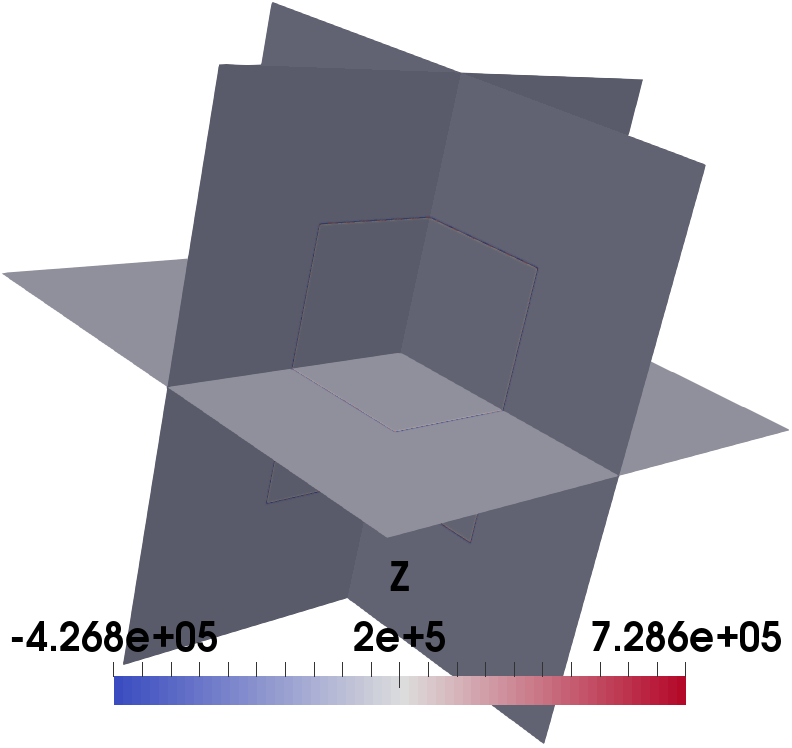}
  \includegraphics[width=0.29\textwidth]{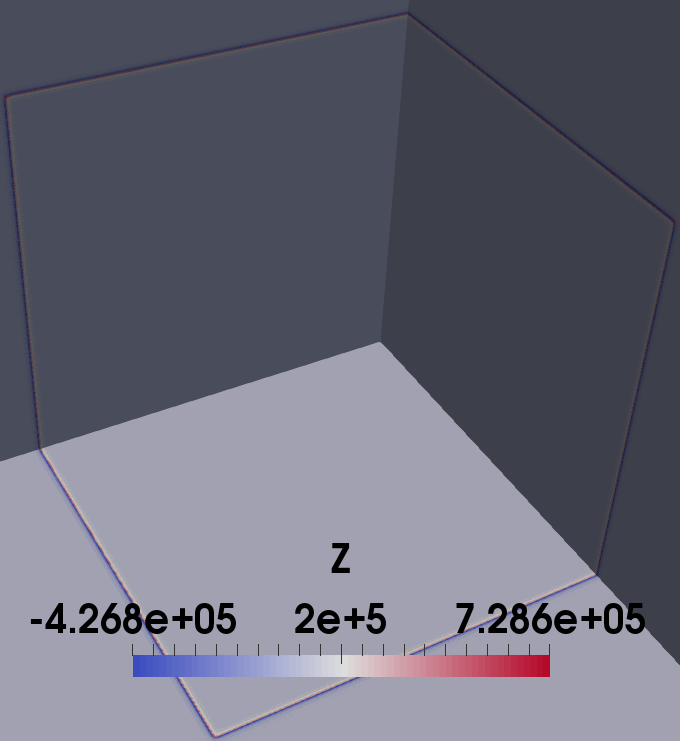}
  \includegraphics[width=0.32\textwidth]{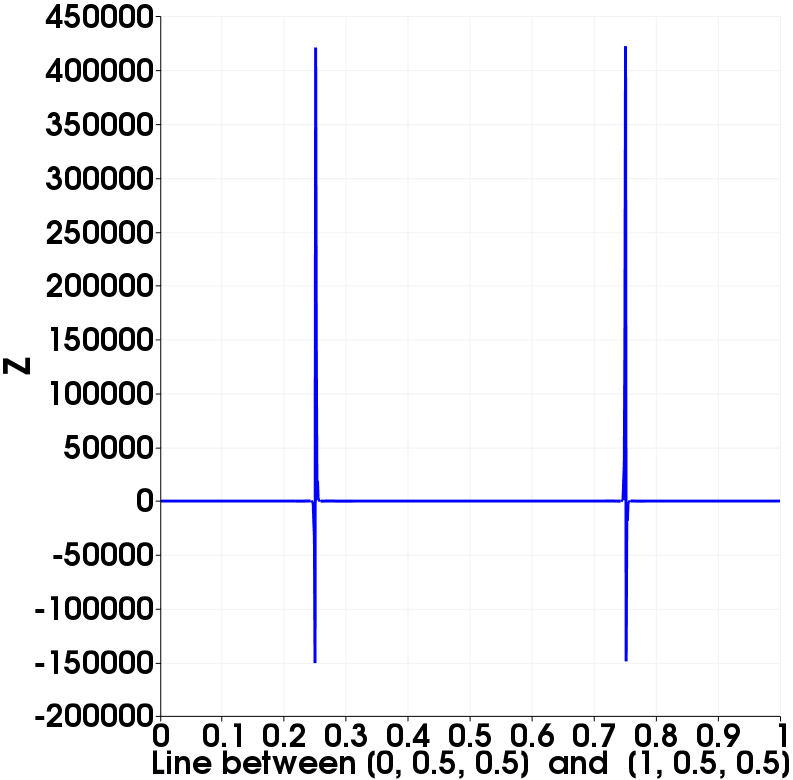}  
  \caption{Example~\ref{subsec:DiscontinuousTarget}: Adaptive meshes at the $21$th refinement step with $1,895,056$
    Dofs, $u_{\varrho h}$ in $\Omega$ and on the cutting plane $x_3=0.5$ (up), 
    control in $\Omega$, near the interface, and on the line between $[0, 0.5, 0.5]$ and $[1, 0.5, 0.5]$ (down),
    $\varrho=10^{-6}$.}   
  \label{fig:distarfunc}
\end{figure} 

\begin{table}[h]
  \centering
  \begin{tabular}{|r|ll|ll|}
      \hline
      $\varrho$ &  $\|\bar{u} - u_{\varrho h}\|_{H^{-1}(\Omega)}^2$ & eoc & $\|\bar{u} - u_{\varrho h}\|_{L^2(\Omega)}^2$ & eoc  \\ \hline
      $10{^0}$  &$2.1308$e$-3$ & & $1.2018$e$-1$  & \\
      $10{^{-1}}$&$1.3401$e$-3$ & $0.20$ & $9.0747$e$-2$ & $0.12$  \\
      $10{^{-2}}$&$1.9101$e$-4$&$0.85$&$3.6563$e$-2$&$0.39$  \\ 
      $10{^{-3}}$&$9.0010$e$-6$&$1.33$&$1.1894$e$-2$&$0.49$  \\
      $10{^{-4}}$&$3.1821$e$-7$&$1.45$&$3.8028$e$-3$& $0.50$  \\
      $10{^{-5}}$&$1.0474$e$-8$  & $1.48$ & $1.2075$e$-3$ & $0.50$ \\   
      $10{^{-6}}$&$3.3491$e$-10$ & $1.50$ & $3.7988$e$-4$ & $0.50$ \\     
      \hline
  \end{tabular}
  \caption{Example~\ref{subsec:DiscontinuousTarget}: Errors $\|\bar{u} - u_{\varrho h}\|_{H^{-1}(\Omega)}^2$ 
    and $\|\bar{u} - u_{\varrho h}\|_{L^2(\Omega)}^2$ of the approximations $u_{\varrho h}$ 
    to the target $\bar{u}\in H^{1/2-\varepsilon}(\Omega)$.} 
  \label{tab:ex2tarcon}
\end{table}

\begin{figure}[h]
  \centering
  \includegraphics[width=0.49\textwidth]{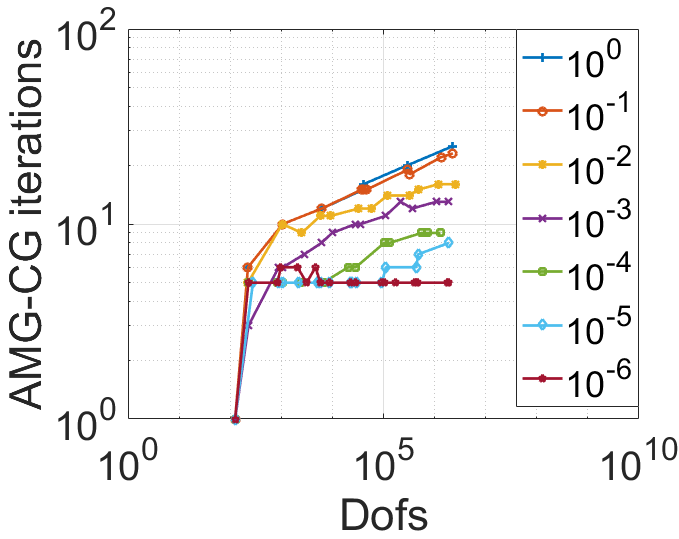}
  \includegraphics[width=0.49\textwidth]{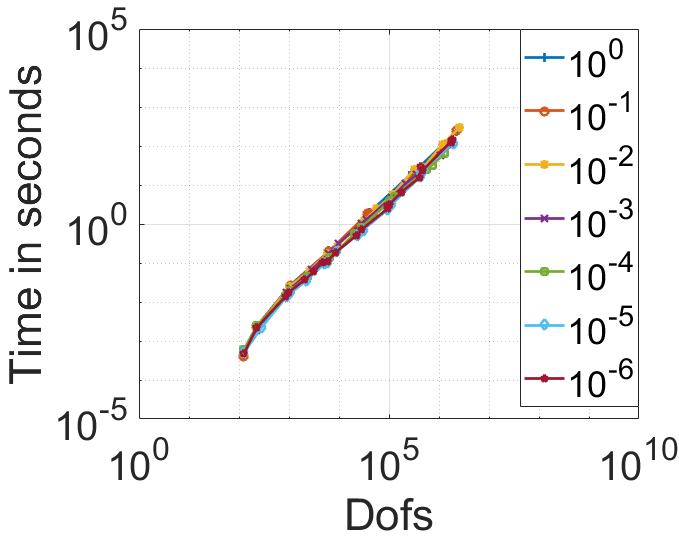}
  \caption{Example~\ref{subsec:DiscontinuousTarget}: Number of AMG preconditioned CG iterations (left) and
    computational time in seconds (right) with respect to different
    $\varrho\in\{10^0, 10^{-1},...10^{-6}\}$.}  
  \label{table:ex2amg}
\end{figure} 
\begin{table}[h]
  \centering
  \begin{tabular}{|r|rrrr|r|}
      \hline
       \diagbox{$\varrho$}{$p$}&  $32$ & $64$ & $128$ & $256$ & \#Dofs \\ \hline
       $10{^0}$ &37 (60.77 s) &46 (24.34 s) & 42 (8.20 s) & 52 (5.02 s)  & $2,210,254$ \\
       $10^{-1}$ &42 (66.85 s) &45 (24.85 s) & 46 (9.28 s) & 43 (4.64 s)  & $2,278,661$ \\
       $10^{-2}$ &34 (60.06 s) &38 (23.86 s) & 42 (9.75 s) & 43 (4.79 s) &  $2,537,773$ \\
       $10^{-3}$ &35 (40.92 s) &39 (15.35 s) & 35 (5.12 s) & 40 (3.23 s) & $1,853,354$ \\   
       $10^{-4}$ &33 (16.79 s) &31 (6.30 s) & 39 (3.17 s) & 36 (1.53 s) & $1,301,825$ \\  
       $10^{-5}$ &33 (17.63 s) &33 (7.65 s) & 33 (3.28 s) & 38 (1.88 s)  & $1,910,829$\\ 
       $10^{-6}$ &36 (18.30 s) &37 (8.42 s) & 35 (3.31 s) & 44 (1.93 s) &  $1,895,056$\\
      \hline
  \end{tabular}
  \caption{Example~\ref{subsec:DiscontinuousTarget}: Number of BDDC preconditioned CG
    iterations, solving time measured in second (s) with respect to the number
    of subdomains $p$ (= number of cores), regularization parameter $\varrho$,
    and different adaptive meshes (\#Dofs).}  
  \label{tab:ex2bddctimeit} 
\end{table}

\subsection{Smooth target with non-zero boundary conditions}
\label{subsec:SmoothTargetViolatBND}
In the third example, we use the smooth target 
\[
\bar{u}(x_1,x_2,x_3)=1+\sin(\pi x_1)\sin(\pi x_2)\sin(\pi x_3)
\]
considered in \cite{NeuSte20}. 
This target  violates the homogeneous Dirichlet boundary conditions. 
As in the previous example, we solve the equation on  
adaptively refined meshes, which results in many local refinements near the
boundary, where the control concentrates. For an illustration, we visualize
the adaptive mesh, the state $u_{\varrho h}$, and the control $z_{\varrho h}$ in
Fig.~\ref{fig:hstarfunc}. Further, we observe the convergence order
$\sqrt{1.5}$ for the approximation in $H^{-1}(\Omega)$, and
$\sqrt{0.5}$ in $L^2(\Omega)$ with respect to $\varrho$; see
Table~\ref{tab:ex3tarcon}. Since $\bar{u}\in C^\infty(\overline{\Omega})$,
but $\bar{u}\notin H_0^1(\Omega)$, we only expect
a reduced order of convergence for this example; 
see also \cite[Table 7]{NeuSte20}. In Fig. \ref{fig:ex3amg}, 
for each $\varrho$, we visualize the number
of AMG preconditioned CG iterations (left plot) and the computational time in
seconds (right plot) during the adaptive refinement levels. 
The AMG preconditioned CG iterations are in a small range between $1$ and $20$. 
The computational time scales well with respect to the number of unknowns. 
To see the performance of the BDDC  preconditioner with respect to an adaptive
refinement, we select similar numbers of Dofs for all regularization
parameters, i.e., \#Dofs=2,146,491 ($\varrho=10^0$),
\#Dofs=2,146,689 ($\varrho=10^{-1}$),
\#Dofs=2,146,575 ($\varrho=10^{-2}$), \#Dofs=2,654,801 ($\varrho=10^{-3}$),
\#Dofs=1,997,688 ($\varrho=10^{-4}$), \#Dofs=3,688,105 ($\varrho=10^{-5}$),
and \#Dofs=3,676,447 ($\varrho=10^{-6}$). 
Table~\ref{tab:ex3bddctimeit} presents
the number of BDDC preconditioned CG iterations, 
and solving times measured in second (s) 
with respect to the number of subdomains $p$ and
the regularization parameter $\varrho$. We again observe a
relatively good performance in both the number of BDDC preconditioned CG
iterations and the computational time, as well as good strong
scalability with respect to the number of subdomains $p$. 

\begin{figure}[h]
  \centering
  \includegraphics[width=0.325\textwidth]{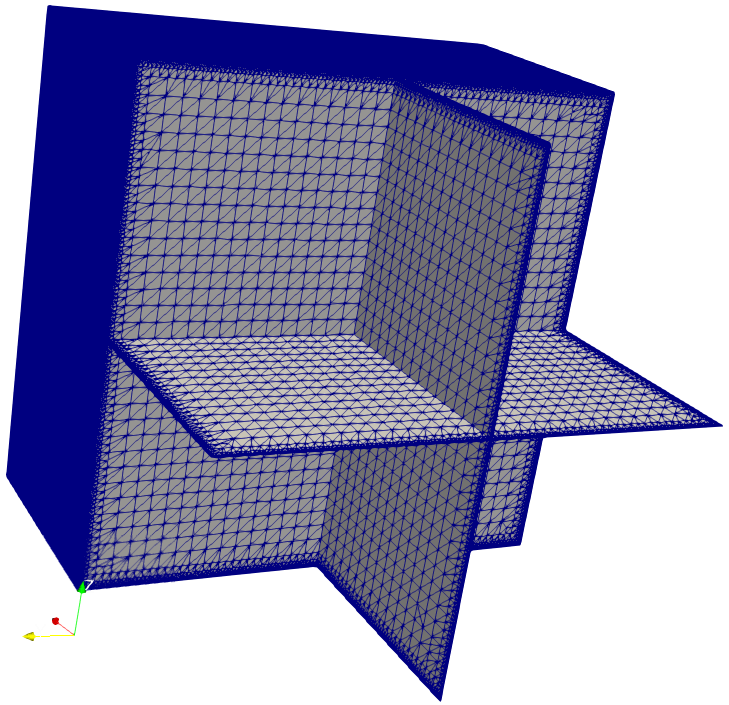}
  \includegraphics[width=0.325\textwidth]{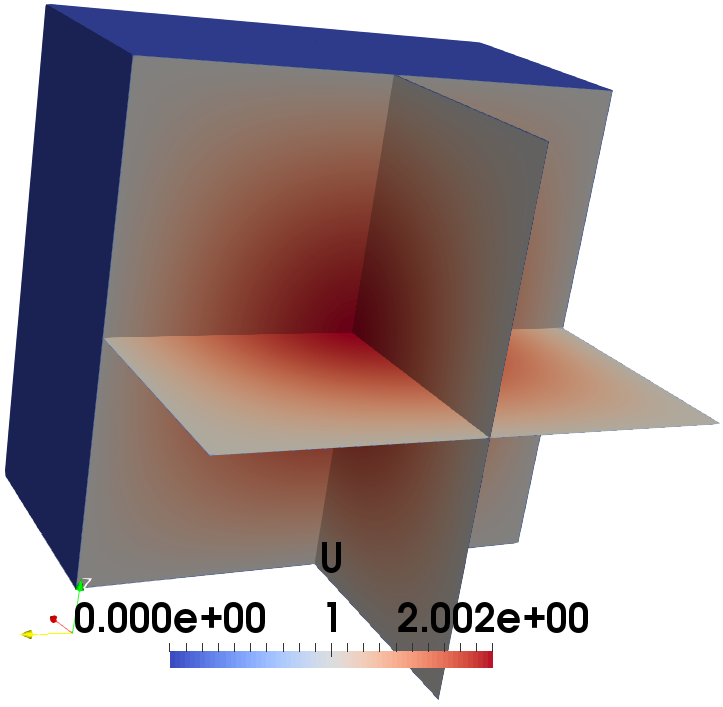}
  \includegraphics[width=0.325\textwidth]{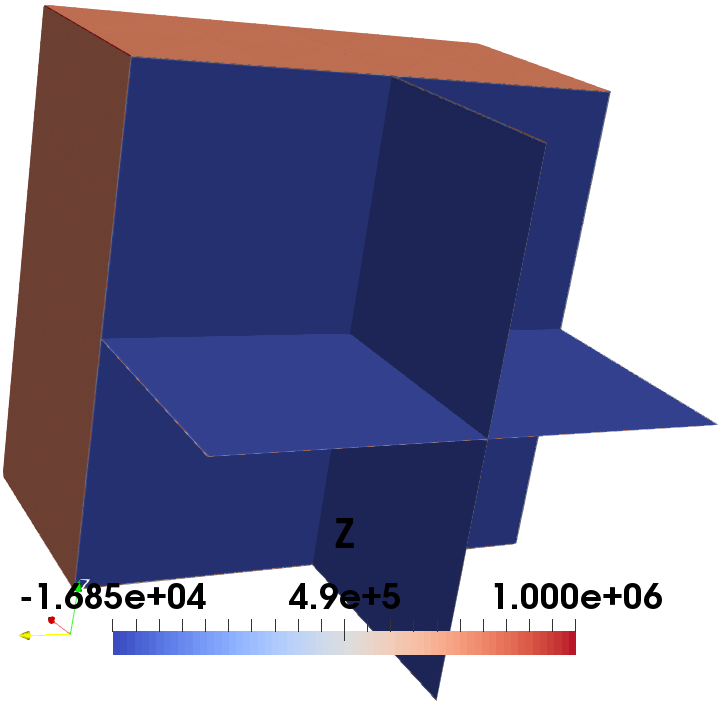}
  \includegraphics[width=0.32\textwidth]{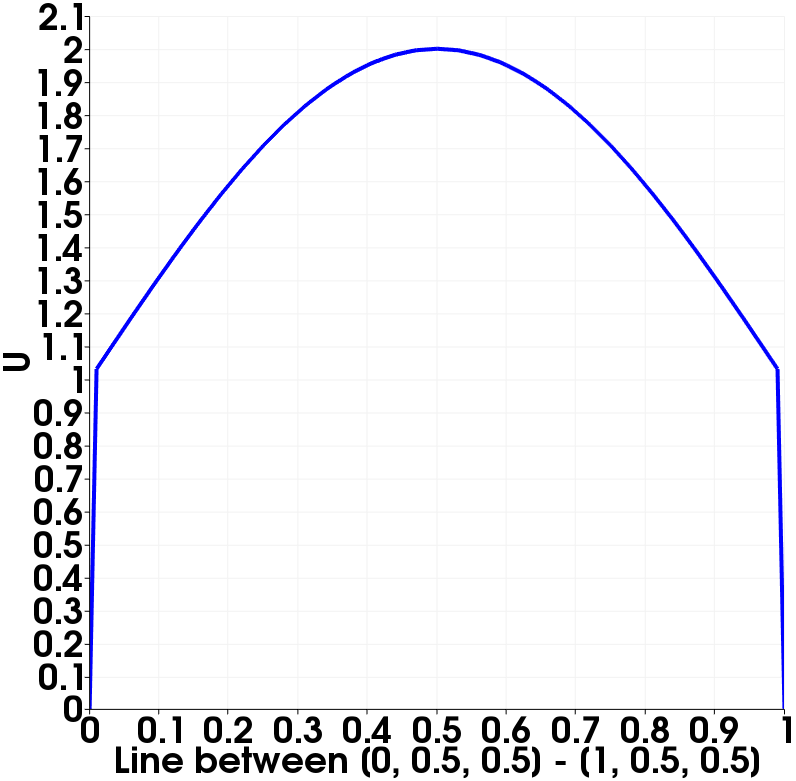}  
  \includegraphics[width=0.35\textwidth]{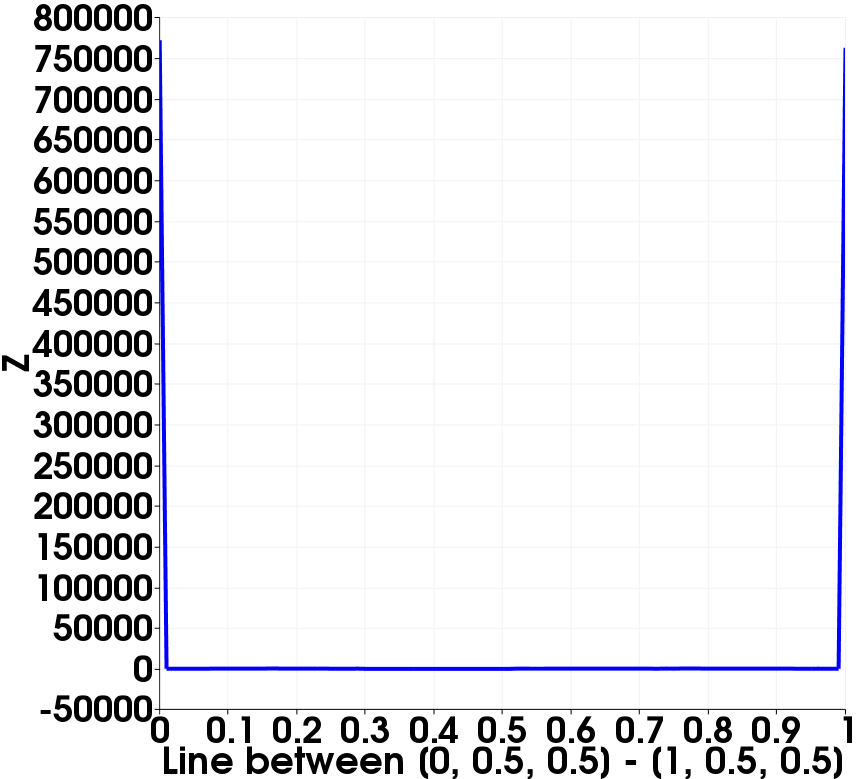}
  \caption{Example~\ref{subsec:SmoothTargetViolatBND}: Adaptive meshes at the
    $12$th refinement step with $3,688,306$ Dofs, $u_{\varrho h}$ in $\Omega$ and $z_{\varrho h}$
    in $\Omega$ (up); $u_{\varrho h}$ and $z_{\varrho h}$ along the line between $[0, 0.5, 0.5]$ and
    $[1, 0.5, 0.5]$ (down), $\varrho=10^{-6}$.}   
  \label{fig:hstarfunc}
\end{figure} 

\begin{table}[h]
  \centering
  \begin{tabular}{|r|ll|ll|}
      \hline
      $\varrho$ &  $\|\bar{u} - u_{\varrho h}\|_{H^{-1}(\Omega)}^2$ & eoc & $\|\bar{u} - u_{\varrho h}\|_{L^2(\Omega)}^2$ & eoc  \\ \hline
      $10{^0}$& $3.5241$e$-2$ & & $1.5613$e$-0$  & \\
      $10{^{-1}}$&$2.1590$e$-2$ & $0.21$ & $1.0842$e$-0$ & $0.16$\\
      $10{^{-2}}$&$2.5430$e$-3$&$0.93$&$3.2593$e$-1$&$0.52$\\ 
      $10{^{-3}}$&$8.8234$e$-5$&$1.46$&$9.5243$e$-2$&$0.53$\\
      $10{^{-4}}$&$2.7302$e$-6$&$1.51$&$3.0292$e$-2$&$0.50$\\
      $10{^{-5}}$&$8.5525$e$-8$&$1.50$&$9.6709$e$-3$&$0.50$\\   
      \hline
  \end{tabular}
  \caption{Example~\ref{subsec:SmoothTargetViolatBND}: Errors $\|\bar{u} - u_{\varrho h}\|_{H^{-1}(\Omega)}^2$ 
    and $\|\bar{u} - u_{\varrho h}\|_{L^2(\Omega)}^2$ of the approximations $u_{\varrho h}$ 
    to the target $\bar{u}\in C^\infty(\overline{\Omega}$, $\bar{u}\notin H_0^1(\Omega)$.} 
  \label{tab:ex3tarcon}
\end{table}

\begin{figure}[h]
  \centering
  \includegraphics[width=0.49\textwidth]{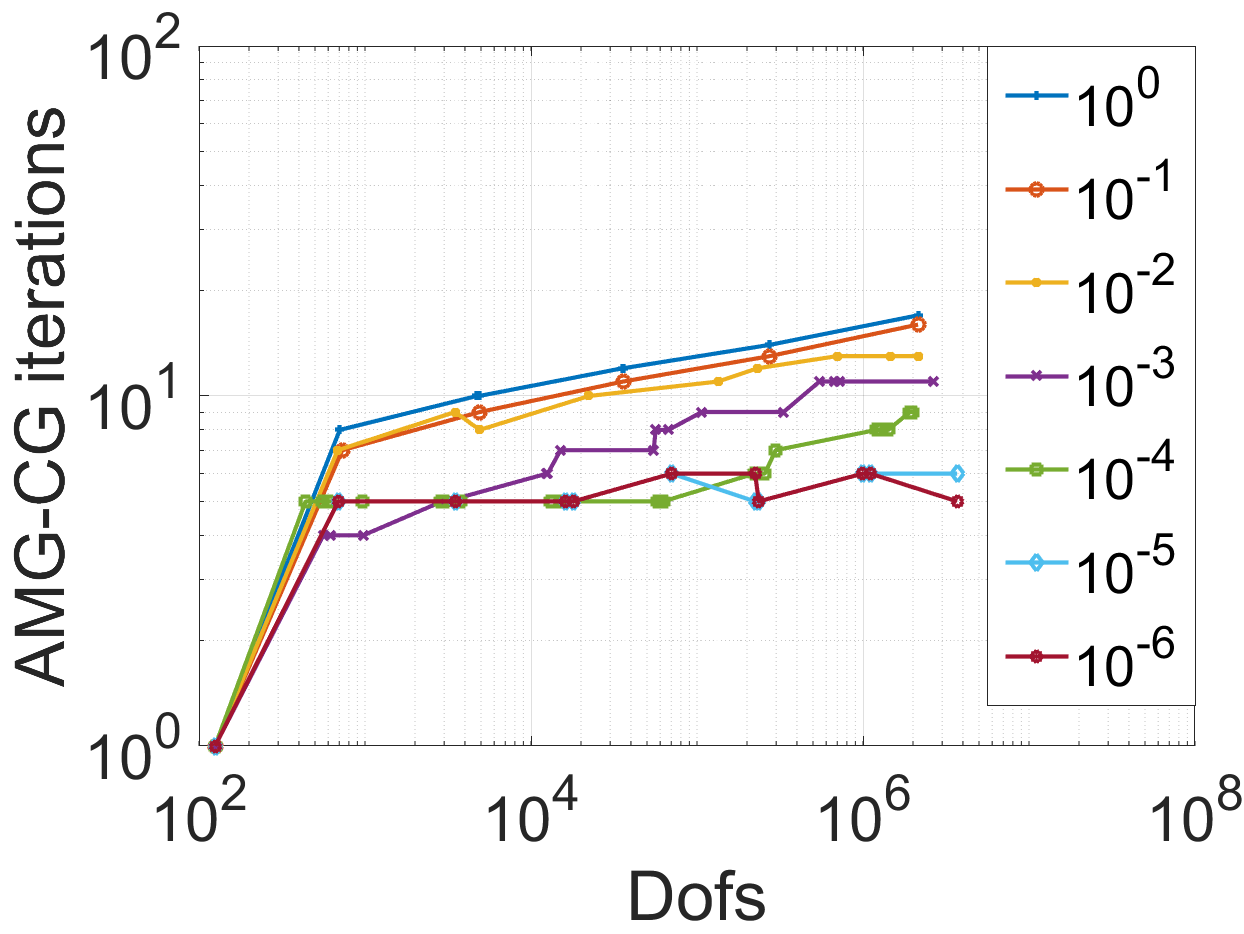}
  \includegraphics[width=0.49\textwidth]{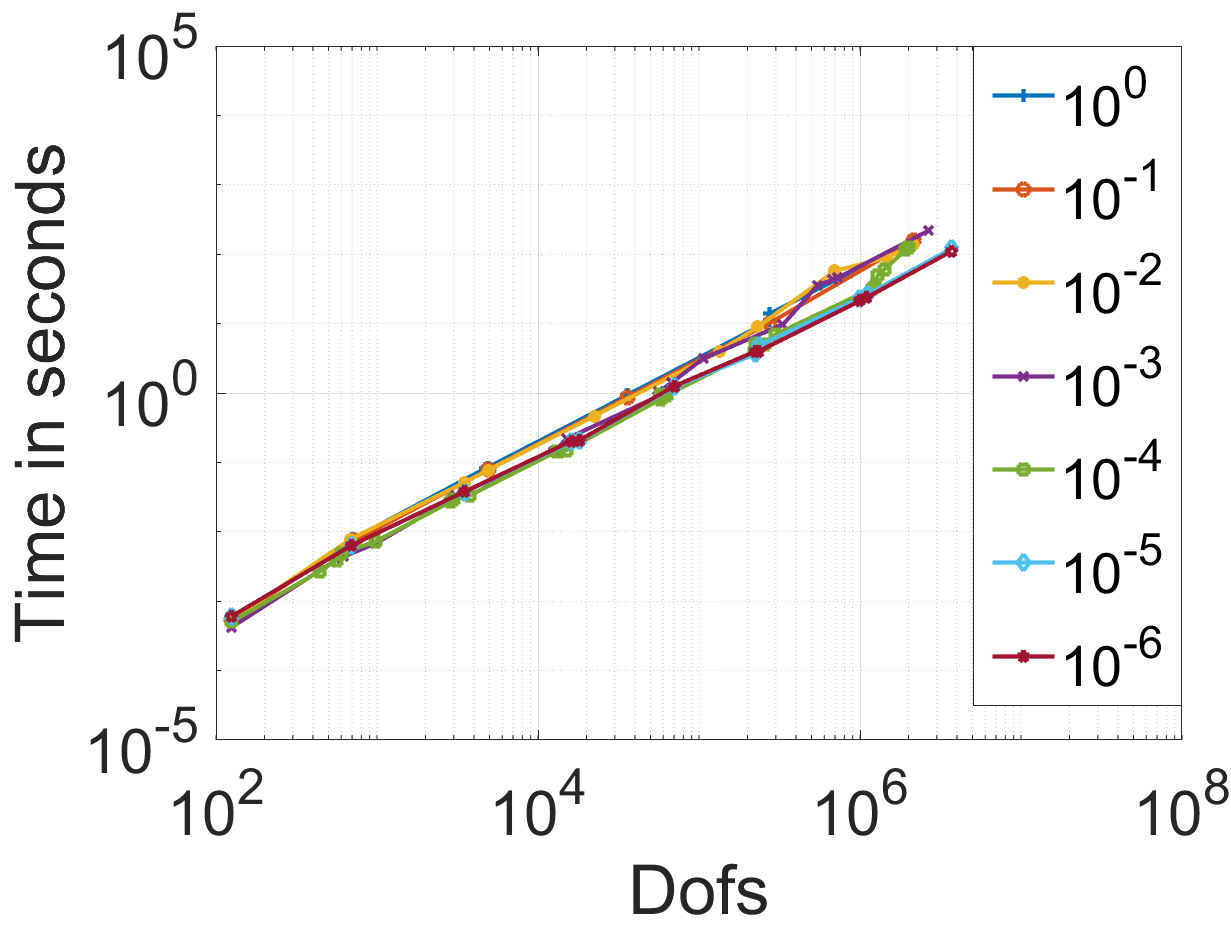}
  \caption{Example~\ref{subsec:SmoothTargetViolatBND}: Number of AMG preconditioned CG iterations (left) and
    computational time in seconds (right) with respect to different
\newline   $\varrho\in\{10^0, 10^{-1},...10^{-6}\}$.}  
  \label{fig:ex3amg}
\end{figure} 

\begin{table}[h]
  \centering
  \begin{tabular}{|r|rrrr|r|}
      \hline
       \diagbox{$\varrho$}{$p$}&  $32$ & $64$ & $128$ & $256$ & \#Dofs \\ \hline
       $10{^0}$&33 (45.61 s)&32 (15.37 s)&32 (5.62 s)&33 (2.78 s)&$2,146,491$\\
       $10^{-1}$&30 (41.51 s)&31 (15.16 s)&32 (5.71 s)&32 (2.84 s)&$2,146,689$\\ 
       $10^{-2}$&30 (42.13 s)&31 (15.46 s)&30 (5.14 s)&32 (2.59 s)&$2,146,575$\\ 
       $10^{-3}$&29 (32.46 s)&34 (14.30 s)&32 (7.20 s)&39 (3.27 s)&$2,654,801$\\   
       $10^{-4}$&29 (15.87 s)&31 (6.98 s)&31 (2.94 s)&29 (1.40 s)&$1,997,688$\\  
       $10^{-5}$&30 (21.03 s)&30 (9.22 s)&33 (4.34 s)&37 (2.73 s)&$3,688,105$\\ 
       $10^{-6}$&31 (22.57 s)&33 (10.06 s)&31 (4.05 s)&38 (2.57 s)&$3,676,447$\\
      \hline
  \end{tabular}
  \caption{Example~\ref{subsec:SmoothTargetViolatBND}: Number of BDDC preconditioned CG
    iterations, solving time measured in second (s)
    with respect to the number of subdomains $p$ (= number of cores), regularization
    parameter $\varrho$, and different adaptive meshes (\#Dofs).} 
  \label{tab:ex3bddctimeit} 
\end{table}

%
%
\section{Conclusions}\label{sec:con}
Using estimates for the error between the exact solution $u_\varrho$ 
of the optimal control problem \eqref{eq:optcon}
and the target $\overline{u}$ in terms of the regularization 
parameter $\varrho$, derived in \cite{NeuSte20}, we have investigated 
the error between the finite element solution $u_{\varrho h}$ 
and the target $\overline{u}$ in terms of $\varrho$ and $h$.
Furthermore, we have studied AMG and BDDC preconditioned CG solvers 
for (adaptive) finite element equations 
arising from the elliptic optimal control problem with energy regularization. 
Both preconditioners have shown good performance with
respect to (adaptive) mesh refinements and a (decreasing) regularization
parameter. Moreover, the BDDC preconditioner has shown its strong
scalability with
respect to the number of subdomains on a distributed memory computer.    

While in this paper we have considered a distributed control problem
subject to the Poisson
equation, a related analysis can be done in the case of a parabolic
heat equation. These results will be published elsewhere, but
see \cite{LSTY:Energy} for a space-time discretization approach.

\section*{Acknowledgments}
The third author was partly supported by the Austrian Science Fund (FWF)
via the grant NFN S117-03.


\bibliography{precondreactiondiff}
\bibliographystyle{abbrv}

\end{document}